\theoremstyle{plain}
  \newtheorem{theorem}{Theorem}[section]
  \newtheorem{lemma}{Lemma}[section]
\theoremstyle{definition}
\theoremstyle{remark}
  \newtheorem{remark}{Remark}[section]
\newcounter{@twon@}
\newenvironment{condition}[1]{
  \list{}{
    \setbox\@tempboxa=\hbox{#1}
    \setlength{\labelwidth}{\wd\@tempboxa}
    \advance\labelwidth\labelsep
    \setlength{\leftmargin}{\labelwidth}

  }
}{\endlist}
\newcommand{\defequal}{\ensuremath{\overset{\textrm{def}}{=}}}
\newcommand{\RR}{\ensuremath{\mathbb{R}}}
\newcommand{\NN}{\ensuremath{\mathbb{N}}}
\numberwithin{equation}{section}
\begin{document}


\title[Fixed point theorem and the Collatz conjecture]{Fixed point theorem in metric spaces and its application to the Collatz conjecture}

\author{Toshiharu Kawasaki}
\address{Faculty of Engineering, Tamagawa University, Tokyo 194--8610, Japan; Faculty of Science, Chiba University, Chiba 263--8522, Japan}
\email{toshiharu.kawasaki@nifty.ne.jp}

\date{\today}

\keywords{Collatz conjecture, Fixed point theorem, Metric space}

\subjclass[2020]{47H10}

\begin{abstract}
In this paper, we show the new fixed point theorem in metric spaces. Furthermore, for this fixed point theorem, we apply to the Collatz conjecture.
\end{abstract}

\maketitle


\section{Introduction}
\label{sec:introduction}


Let $\NN \defequal \{ 1, 2, 3, \ldots \}$ and let $C$ be a mapping from $\NN$ into itself defined by
\begin{eqnarray*}
Cx \defequal \left\{\begin{array}{ll}
                   \frac{1}{2}x, & \textrm{if} \ x \ \textrm{is even}, \\
                   3x+1, & \textrm{if} \ x \ \textrm{is odd}.
                   \end{array}\right.
\end{eqnarray*}
Then the Collatz conjecture is as follows: for any $x \in \NN$, there exists $c(x) \in \NN$ such that $C^{c(x)}x = 1$. Many researchers have attempted to prove this conjecture. A most famous paper is probably \cite{Tao2022} by Tao. In his paper, define $\textup{Col}_{\textup{min}}(N) \defequal \inf\{ N, C(N), C^{2}(N), \ldots \}$ and he showed that
\begin{theorem}
Let $f$ be a function from $\NN$ into $\RR$ with $\lim_{N \to \infty}f(N) = \infty$. Then $\textup{Col}_{\textup{min}}(N) < f(N)$ for allmost all $N \in \NN$ in the sence of logarithmic density.
\end{theorem}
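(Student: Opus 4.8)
The plan is to follow Tao's approach: reduce the statement to a quantitative equidistribution estimate for the iterated \emph{Syracuse map}, and then feed that equidistribution into a large‑deviation estimate for the logarithmic increments of the orbit. First I would pass from $C$ to the Syracuse map $\textup{Syr}$ on the odd integers, $\textup{Syr}(n) \defequal (3n+1)/2^{\nu_{2}(3n+1)}$, where $\nu_{2}$ is the $2$-adic valuation; since every Collatz orbit meets the odd integers and $\textup{Col}_{\textup{min}}(N)$ is small exactly when some Collatz iterate of $N$ is small, it suffices to prove that for almost all odd $N$ (in logarithmic density) some Syracuse iterate of $N$ lies below $f(N)$. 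Writing $\textup{Syr}^{(n)}(N) = N\,3^{n}\,2^{-(a_{1}+\cdots+a_{n})} + R_{n}$, where $a_{i} \defequal \nu_{2}\bigl(3\,\textup{Syr}^{(i-1)}(N)+1\bigr)$ and $R_{n}$ is a lower‑order ``offset'' term built from the $a_{i}$, the governing heuristic is that the $a_{i}$ behave like independent variables with the geometric law (value $j$ with probability $2^{-j}$), so the expected logarithmic drift per step is $\log 3 - 2\log 2 = \log(3/4) < 0$ and the orbit contracts by a factor $(3/4)^{n+o(n)}$ with overwhelming probability. The task is to make this rigorous in a form robust enough to iterate.

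The core step is the equidistribution estimate. Fix $k$ and, for $M$ large compared with $n$, let $N$ range uniformly over a fixed residue class modulo $2^{M}$; one must show that $(a_{1},\dots,a_{n})$ is then close in total variation to a genuinely independent geometric vector, and --- more importantly --- that the induced distribution of $\textup{Syr}^{(n)}(N)$ modulo $3^{k}$ is close to uniform on $\ZZ/3^{k}\ZZ$. For $n \ge k$ the main term $N\,3^{n}\,2^{-(a_{1}+\cdots+a_{n})}$ vanishes modulo $3^{k}$, so this is a statement about the distribution of the offset $R_{n}$ alone, the ``$n$-Syracuse random variable''. I would prove it by Fourier analysis on $\ZZ/3^{k}\ZZ$: a nonzero Fourier coefficient of the pushforward factors, by conditioning on the $a_{i}$ one at a time, into a product of geometrically weighted exponential sums of the shape $\sum_{a\ge 1}2^{-a}\exp\bigl(2\pi i\,\theta\,2^{-a}\bigr)$, with $\theta$ a fraction of denominator $3^{k}$ and $2^{-a}$ read modulo $3^{k}$, and each such factor has modulus at most $1-\delta(k)$. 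The arithmetic fact that forces this cancellation is the classical statement that $2$ is a primitive root modulo $3^{k}$ for every $k \ge 1$, so that $2^{-a}$ sweeps out all of $(\ZZ/3^{k}\ZZ)^{\times}$ as $a$ runs over a period of length $2\cdot 3^{k-1}$ and the phases cannot conspire; counting the effective factors then gives a total‑variation bound that decays in $n$, and making that decay fast enough is exactly what requires honestly good (not merely nontrivial) bounds on the geometric exponential sums.

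Granting the equidistribution, I would run the descent. One application of it, combined with a Chernoff bound for $a_{1}+\cdots+a_{n}$ (which concentrates near $2n$), shows that outside a set of small logarithmic density one has $\textup{Syr}^{(n)}(N) \le N^{\theta}$ for a fixed $\theta < 1$ and a number of steps $n$ proportional to $\log N$; and the mod‑$3^{k}$ equidistribution is precisely what lets the descent be repeated, since it shows the reduced value is again, in the relevant weak sense, a ``fresh'' random odd number, so the hypotheses of the descent step are restored. Iterating drives the value below $N^{\theta^{2}}, N^{\theta^{3}}, \dots$, hence below $f(N)$ after $O(\log\log N)$ passes. The exceptional sets accumulated over the passes have to be combined by a union bound, and this is where logarithmic density --- rather than natural density --- is indispensable: the sampling is organized over dyadic scales, and that weighting is the one stable under the multiplicative scale reductions produced by each descent step. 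The main obstacle is the equidistribution estimate of the second paragraph: one needs the total‑variation bound modulo $3^{k}$ to be quantitatively strong enough to survive the union bounds in the iteration, and one must justify replacing the true arithmetic dynamics --- in which the $a_{i}$ are not literally independent --- by the independent geometric model, which is carried out by working in residue classes modulo a large power of $2$ and estimating the resulting discrepancy.
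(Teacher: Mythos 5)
First, a point of order: the paper you were given does not actually prove this statement --- it is Tao's theorem, quoted in the introduction with a citation to \cite{Tao2022}, and the rest of the paper concerns an unrelated fixed point theorem. So the only meaningful benchmark is Tao's own argument, and your outline does track its architecture faithfully: the passage to the Syracuse map on the odd integers, the observation that for $n \ge k$ the main term $N3^{n}2^{-(a_{1}+\cdots+a_{n})}$ dies modulo $3^{k}$ so that everything rests on the distribution of the offset (the $n$-Syracuse random variable), the Chernoff concentration of $a_{1}+\cdots+a_{n}$ near $2n$ giving the drift $\log(3/4)$ per step, and the iteration of the descent using the stability of logarithmic density under multiplicative rescaling.

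As a proof, however, the proposal has a genuine gap exactly where you yourself locate the main obstacle, and the mechanism you offer for closing it does not work as stated. After conditioning on the $a_{i}$ one at a time, a nonzero Fourier coefficient of the offset modulo $3^{k}$ factors into roughly $n$ sums of the shape $\sum_{a\ge 1}2^{-a}\exp\bigl(2\pi i\,\theta\,2^{-a}\bigr)$, and you bound each factor by $1-\delta(k)$ using the fact that $2$ is a primitive root modulo $3^{k}$. But $\delta(k)$ must genuinely degrade as $k$ grows (for frequencies with small numerator the dominant early terms of the geometric sum have nearly aligned phases), so the resulting total-variation bound is of the shape $3^{k/2}\bigl(1-\delta(k)\bigr)^{cn}$, which is useless unless $n$ is enormous relative to $k$ --- and the descent needs $k$ growing and needs the bound to survive a union bound over $O(\log\log N)$ passes. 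Tao's key estimate (his Proposition 1.17), namely the decay $O_{A}(3^{-An})$ of these Fourier coefficients uniformly over nonzero frequencies, is the heart of the matter, and its proof is \emph{not} a term-by-term product bound: it isolates consecutive pairs $(a_{i},a_{i+1})$ at which an absolute, $k$-independent amount of cancellation occurs and then runs a delicate counting/induction over the tuples with too few such pairs. Without that input you cannot certify that the reduced value after one descent pass is again ``fresh'' modulo $3^{k}$, so the iteration never gets off the ground. In short: the roadmap is the right one, but the load-bearing estimate is asserted rather than proved, and the justification sketched for it is quantitatively insufficient.
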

\noindent Using this theorem, we obtain $\textup{Col}_{\textup{min}}(N) < \log\log\log\log N$ for almost all $N \in \NN$. Furthermore, see the papers referenced in his paper.

In this paper, in the section~\ref{sec:fixed-point-theorem}, we show the new fixed point theorem in metric spaces. This fixed point theorem is an extension of result showed in \cite{Kawasaki2016a}. Although these results applies to Banach spaces, see also \cite{Kawasaki2018a} and \cite{Kawasaki2021}.

Define $d(x, y) \defequal |x-y|$ for any $x, y \in \NN$. Then $(\NN, d)$ is a complete metric space. In the section~\ref{sec:applying-to-the-collatz-conjecture}, for this fixed point theorem, we apply to the Collatz conjecture.


\section{Fixed point theorem}
\label{sec:fixed-point-theorem}


Let $(X, d)$ be a metric space with a metric $d$. We consider mappings $\alpha$, $\beta$, $\gamma$, $\delta$, $\varepsilon$, and $\zeta$ from $X\times X$ into $\RR$. A mapping $T$ from $X$ into itself is called an $(\alpha, \beta, \gamma, \delta, \varepsilon, \zeta)$-weighted generalized pseudocontraction if
\begin{eqnarray*}
& & \alpha(x, y)d(Tx, Ty)^{2}+\beta(x, y)d(x, Ty)^{2}+\gamma(x, y)d(Tx, y)^{2}+\delta(x, y)d(x, y)^{2} \\
& & \quad +\varepsilon(x, y)d(x, Tx)^{2}+\zeta(x, y)d(y, Ty)^{2} \\
& & \le 0
\end{eqnarray*}
holds for any $x, y \in X$.

\begin{lemma}
\label{lemma:1}
Let $(X, d)$ be a metric space. Then
\begin{eqnarray*}
2\min\{ \theta, 0 \}(d(x, z)^{2}+d(z, y)^{2}) \le \theta d(x, y)^{2}
\end{eqnarray*}
holds for any $\theta \in \RR$ and for any $x, y, z \in X$.
\end{lemma}
\begin{proof}
By the triangle inequality
\begin{eqnarray*}
|d(x, z)-d(z, y)| \le d(x, y) \le d(x, z)+d(z, y)
\end{eqnarray*}
holds for any $x, y, z \in X$. Squaring each side, we obtain
\begin{eqnarray*}
d(x, z)^{2}-2d(x, z)d(z, y)+d(z, y)^{2} \le d(x, y)^{2} \le d(x, z)^{2}+2d(x, z)d(z, y)+d(z, y)^{2}.
\end{eqnarray*}
Therefore, we obtain
\begin{eqnarray*}
\theta d(x, z)^{2}-2|\theta|d(x, z)d(z, y)+\theta d(z, y)^{2} \le \theta d(x, y)^{2}
\end{eqnarray*}
for any $\theta \in \RR$. Since
\begin{eqnarray*}
& & \theta d(x, z)^{2}-2|\theta|d(x, z)d(z, y)+\theta d(z, y)^{2} \\
& & = |\theta|(d(x, z)-d(z, y))^{2}+(\theta-|\theta|)(d(x, z)^{2}+d(z, y)^{2}) \\
& & \ge (\theta-|\theta|)(d(x, z)^{2}+d(z, y)^{2}) \\
& & = 2\min\{ \theta, 0 \}(d(x, z)^{2}+d(z, y)^{2}),
\end{eqnarray*}
we obtain the desired result.
\end{proof}

\begin{lemma}
\label{lemma:2}
Let $(X, d)$ be a metric space, let $T$ be an $(\alpha, \beta, \gamma, \delta, \varepsilon, \zeta)$-weighted generalized pseudocontraction, let $\lambda$ be a mapping from $X\times X$ into $[0, 1]$, and let
\begin{eqnarray*}
\alpha_{\lambda}(x, y) & \defequal & (1-\lambda(x, y))\alpha(x, y)+\lambda(x, y)\alpha(y, x); \\
\beta_{\lambda}(x, y) & \defequal & (1-\lambda(x, y))\beta(x, y)+\lambda(x, y)\gamma(y, x); \\
\gamma_{\lambda}(x, y) & \defequal & (1-\lambda(x, y))\gamma(x, y)+\lambda(x, y)\beta(y, x); \\
\delta_{\lambda}(x, y) & \defequal & (1-\lambda(x, y))\delta(x, y)+\lambda(x, y)\delta(y, x); \\
\varepsilon_{\lambda}(x, y) & \defequal & (1-\lambda(x, y))\varepsilon(x, y)+\lambda(x, y)\zeta(y, x); \\
\zeta_{\lambda}(x, y) & \defequal & (1-\lambda(x, y))\zeta(x, y)+\lambda(x, y)\varepsilon(y, x).
\end{eqnarray*}
Then $T$ is an $(\alpha_{\lambda}, \beta_{\lambda}, \gamma_{\lambda}, \delta_{\lambda}, \varepsilon_{\lambda}, \zeta_{\lambda})$-weighted generalized pseudocontraction.
\end{lemma}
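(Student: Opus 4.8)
The plan is to exploit the symmetry $d(u,v)=d(v,u)$ of the metric together with the trivial fact that a convex combination of non-positive numbers is non-positive. Fix $x, y \in X$ and abbreviate $\lambda := \lambda(x,y) \in [0,1]$. First I would write down the defining inequality of the $(\alpha, \beta, \gamma, \delta, \varepsilon, \zeta)$-weighted generalized pseudocontraction at the pair $(x, y)$, and then write it down a second time at the swapped pair $(y, x)$.

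Next I would rewrite the inequality at $(y, x)$ so that it is expressed in terms of the same six squared distances $d(Tx, Ty)^2$, $d(x, Ty)^2$, $d(Tx, y)^2$, $d(x, y)^2$, $d(x, Tx)^2$, $d(y, Ty)^2$ that occur in the inequality at $(x, y)$. Using symmetry, $d(Ty, Tx)^2 = d(Tx, Ty)^2$, $d(y, Tx)^2 = d(Tx, y)^2$, $d(Ty, x)^2 = d(x, Ty)^2$, $d(y, x)^2 = d(x, y)^2$; thus in the inequality at $(y,x)$ the coefficient $\alpha(y,x)$ multiplies $d(Tx,Ty)^2$, the coefficient $\beta(y,x)$ multiplies $d(Tx,y)^2$, the coefficient $\gamma(y,x)$ multiplies $d(x,Ty)^2$, $\delta(y,x)$ multiplies $d(x,y)^2$, $\varepsilon(y,x)$ multiplies $d(y,Ty)^2$, and $\zeta(y,x)$ multiplies $d(x,Tx)^2$. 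In other words the roles of $\beta$ and $\gamma$ are interchanged, the roles of $\varepsilon$ and $\zeta$ are interchanged, while $\alpha$ and $\delta$ stay in place.

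Then I would multiply the inequality at $(x, y)$ by $1 - \lambda \ge 0$, multiply the rewritten inequality at $(y, x)$ by $\lambda \ge 0$, and add the two. Both summands are $\le 0$, so the sum is $\le 0$. Reading off the coefficient of each of the six squared distances in the resulting inequality, the coefficient of $d(Tx,Ty)^2$ is $(1-\lambda)\alpha(x,y)+\lambda\alpha(y,x) = \alpha_\lambda(x,y)$, the coefficient of $d(x,Ty)^2$ is $(1-\lambda)\beta(x,y)+\lambda\gamma(y,x) = \beta_\lambda(x,y)$, and similarly for $\gamma_\lambda$, $\delta_\lambda$, $\varepsilon_\lambda$, $\zeta_\lambda$ — these are precisely the definitions given in the statement. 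Hence the required inequality holds at $(x, y)$, and since $(x, y)$ was arbitrary, $T$ is an $(\alpha_\lambda, \beta_\lambda, \gamma_\lambda, \delta_\lambda, \varepsilon_\lambda, \zeta_\lambda)$-weighted generalized pseudocontraction.

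There is no real obstacle here; the argument is a one-line convex combination. The only point requiring care is the bookkeeping after the swap — making sure that $\beta(y,x)$ lands on the $d(Tx,y)^2$ term and $\gamma(y,x)$ on the $d(x,Ty)^2$ term, etc., so that the totals agree exactly with the stated definitions of $\beta_\lambda$ and $\gamma_\lambda$ (and of $\varepsilon_\lambda$, $\zeta_\lambda$). Note that Lemma~\ref{lemma:1} is not needed for this lemma; it will presumably enter later in the application.
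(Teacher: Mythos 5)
Your proposal is correct and is essentially identical to the paper's proof: write the defining inequality at $(x,y)$ and at $(y,x)$, use symmetry of $d$ to see that $\beta$ and $\gamma$ (and $\varepsilon$ and $\zeta$) swap roles, then take the convex combination with weights $1-\lambda(x,y)$ and $\lambda(x,y)$. Your extra bookkeeping of which coefficient lands on which squared distance is exactly what the paper does implicitly.
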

\begin{proof}
By assumption
\begin{eqnarray*}
& & \alpha(x, y)d(Tx, Ty)^{2}+\beta(x, y)d(x, Ty)^{2}+\gamma(x, y)d(Tx, y)^{2}+\delta(x, y)d(x, y)^{2} \\
& & \quad +\varepsilon(x, y)d(x, Tx)^{2}+\zeta(x, y)d(y, Ty)^{2} \\
& & \le 0
\end{eqnarray*}
holds for any $x, y \in X$. Exchanging $x$ and $y$, we obtain
\begin{eqnarray*}
& & \alpha(y, x)d(Tx, Ty)^{2}+\gamma(y, x)d(x, Ty)^{2}+\beta(y, x)d(Tx, y)^{2}+\delta(y, x)d(x, y)^{2} \\
& & \quad +\zeta(y, x)d(x, Tx)^{2}+\varepsilon(y, x)d(y, Ty)^{2} \\
& & \le 0
\end{eqnarray*}
for any $x, y \in X$. By multiplying the upper inequality by $1-\lambda(x, y)$, multiplying the lower inequality by $\lambda(x, y)$, and adding them together, we obtain the desired result.
\end{proof}

\begin{theorem}
\label{th:1}
Let $(X, d)$ be a metric space and let $T$ be an $(\alpha, \beta, \gamma, \delta, \varepsilon, \zeta)$-weighted generalized pseudocontraction. Suppose that there exists a mapping $\lambda$ from $X\times X$ into $[0, 1]$ such that one of the following holds:
\begin{condition}{0000}
\item[(1)] $\alpha_{\lambda}(x, y)+\zeta_{\lambda}(x, y)+2\min\{ \beta_{\lambda}(x, y), 0 \} > 0$ and $\delta_{\lambda}(x, y)+\varepsilon_{\lambda}(x, y)+2\min\{ \beta_{\lambda}(x, \allowbreak y), \allowbreak 0 \} \ge 0$ for any $x, y \in X$;
\item[(2)] $\alpha_{\lambda}(x, y)+\zeta_{\lambda}(x, y)+2\min\{ \beta_{\lambda}(x, y), 0 \} \ge 0$ and $\delta_{\lambda}(x, y)+\varepsilon_{\lambda}(x, y)+2\min\{ \beta_{\lambda}(x, \allowbreak y), \allowbreak 0 \} > 0$ for any $x, y \in X$;
\item[(3)] $\alpha_{\lambda}(x, y)+\varepsilon_{\lambda}(x, y)+2\min\{ \gamma_{\lambda}(x, y), 0 \} > 0$ and $\delta_{\lambda}(x, y)+\zeta_{\lambda}(x, y)+2\min\{ \gamma_{\lambda}(x, \allowbreak y), \allowbreak 0 \} \ge 0$ for any $x, y \in X$;
\item[(4)] $\alpha_{\lambda}(x, y)+\varepsilon_{\lambda}(x, y)+2\min\{ \gamma_{\lambda}(x, y), 0 \} \ge 0$ and $\delta_{\lambda}(x, y)+\varepsilon_{\lambda}(x, y)+2\min\{ \beta_{\lambda}(x, \allowbreak y), \allowbreak 0 \} > 0$ for any $x, y \in X$;
\item[(5)] there exists $A \in (0, 1)$ such that for any $x, y \in X$, $\alpha_{\lambda}(x, y)+\zeta_{\lambda}(x, y)+2\min\{ \beta_{\lambda}(x, y), 0 \} > 0$ and
\begin{eqnarray*}
-\frac{\delta_{\lambda}(x, y)+\varepsilon_{\lambda}(x, y)+2\min\{ \beta_{\lambda}(x, y), 0 \}}{\alpha_{\lambda}(x, y)+\zeta_{\lambda}(x, y)+2\min\{ \beta_{\lambda}(x, y), 0 \}} \le A,
\end{eqnarray*}
or $\alpha_{\lambda}(y, x)+\varepsilon_{\lambda}(y, x)+2\min\{ \gamma_{\lambda}(y, x), 0 \} > 0$ and
\begin{eqnarray*}
-\frac{\delta_{\lambda}(y, x)+\zeta_{\lambda}(y, x)+2\min\{ \gamma_{\lambda}(y, x), 0 \}}{\alpha_{\lambda}(y, x)+\varepsilon_{\lambda}(y, x)+2\min\{ \gamma_{\lambda}(y, x), 0 \}} \le A.
\end{eqnarray*}
\end{condition}
Then $\{ T^{n}x \mid n \in \NN \}$ is Cauchy for any $x \in X$.
\end{theorem}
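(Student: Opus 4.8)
The plan is to fix $x\in X$, set $x_{n}\defequal T^{n}x$ for $n\ge 0$ (so $x_{0}=x$), and feed consecutive iterates into the $(\alpha_{\lambda},\beta_{\lambda},\gamma_{\lambda},\delta_{\lambda},\varepsilon_{\lambda},\zeta_{\lambda})$-weighted generalized pseudocontraction inequality supplied by Lemma~\ref{lemma:2}, using Lemma~\ref{lemma:1} to control the cross terms $d(x,Ty)^{2}$ and $d(Tx,y)^{2}$.

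First I would substitute $(x,y)=(x_{n-1},x_{n})$. Then $d(Tx,y)^{2}=d(x_{n},x_{n})^{2}=0$, while $d(Tx,Ty)^{2}=d(y,Ty)^{2}=d(x_{n},x_{n+1})^{2}$ and $d(x,y)^{2}=d(x,Tx)^{2}=d(x_{n-1},x_{n})^{2}$; the only surviving cross term is $\beta_{\lambda}(x_{n-1},x_{n})\,d(x_{n-1},x_{n+1})^{2}$, which I would bound below using Lemma~\ref{lemma:1} with $\theta=\beta_{\lambda}(x_{n-1},x_{n})$ and midpoint $z=x_{n}$. This produces
\[
\bigl(\alpha_{\lambda}+\zeta_{\lambda}+2\min\{\beta_{\lambda},0\}\bigr)d(x_{n},x_{n+1})^{2}+\bigl(\delta_{\lambda}+\varepsilon_{\lambda}+2\min\{\beta_{\lambda},0\}\bigr)d(x_{n-1},x_{n})^{2}\le 0,
\]
with all coefficients evaluated at $(x_{n-1},x_{n})$. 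Substituting instead $(x,y)=(x_{n},x_{n-1})$ kills the $\beta_{\lambda}$ term, and applying Lemma~\ref{lemma:1} with $\theta=\gamma_{\lambda}(x_{n},x_{n-1})$ and midpoint $z=x_{n}$ gives the symmetric estimate
\[
\bigl(\alpha_{\lambda}+\varepsilon_{\lambda}+2\min\{\gamma_{\lambda},0\}\bigr)d(x_{n},x_{n+1})^{2}+\bigl(\delta_{\lambda}+\zeta_{\lambda}+2\min\{\gamma_{\lambda},0\}\bigr)d(x_{n-1},x_{n})^{2}\le 0,
\]
with all coefficients evaluated at $(x_{n},x_{n-1})$.

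From here the five cases split cleanly. Under (1) the first estimate has a strictly positive coefficient on $d(x_{n},x_{n+1})^{2}$ and a nonnegative one on $d(x_{n-1},x_{n})^{2}$, so $d(x_{n},x_{n+1})=0$ for all $n\ge 1$; under (2) the roles reverse and $d(x_{n-1},x_{n})=0$ for all $n\ge 1$; (3) and (4) are the corresponding statements read off the second estimate. Thus in cases (1)--(4) the orbit is eventually constant, hence trivially Cauchy. For (5) I would note that at every $n$ one of the two alternatives holds with the same $A\in(0,1)$, and dividing the relevant estimate by its strictly positive leading coefficient gives, in either alternative, $d(x_{n},x_{n+1})^{2}\le A\,d(x_{n-1},x_{n})^{2}$. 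Iterating yields $d(x_{n},x_{n+1})\le A^{n/2}d(x_{0},x_{1})$, and since $\sqrt{A}<1$ the series $\sum_{n}A^{n/2}$ converges, so the triangle inequality shows $\{x_{n}\}$ is Cauchy.

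I do not anticipate a real obstacle; the work is essentially bookkeeping. The points requiring care are: tracking which of the six weights survives each substitution and at which argument, $(x_{n-1},x_{n})$ or $(x_{n},x_{n-1})$, it must be evaluated; invoking Lemma~\ref{lemma:1} with the correct sign of $\theta$ and the correct midpoint so that replacing a cross term by its lower bound preserves the direction of the inequality; and, in case (5), observing that the contraction estimate $d(x_{n},x_{n+1})^{2}\le A\,d(x_{n-1},x_{n})^{2}$ comes out the same from both alternatives, so the alternative in force is allowed to vary with $n$ without affecting the conclusion.
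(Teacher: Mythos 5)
Your proposal is correct and follows essentially the same route as the paper: substitute consecutive iterates into the pseudocontraction inequality, absorb the one surviving cross term via Lemma~\ref{lemma:1} with midpoint $T^{n}x$, and read off the five cases (eventual constancy of the orbit in (1)--(4), geometric decay plus a geometric series in (5)). The only point worth flagging is that, like the paper's own proof, you read condition (4) as the $\gamma_{\lambda}$-analogue of (2), i.e.\ with $\delta_{\lambda}+\zeta_{\lambda}+2\min\{ \gamma_{\lambda}, 0 \} > 0$, which is what the second estimate actually requires; the condition as literally printed mixes coefficients from the two estimates and appears to be a typo.
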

\begin{proof}
By Lemma~\ref{lemma:2} $T$ is an $(\alpha_{\lambda}, \beta_{\lambda}, \gamma_{\lambda}, \delta_{\lambda}, \varepsilon_{\lambda}, \zeta_{\lambda})$-weighted generalized pseudocontraction. By Lemma~\ref{lemma:1} we obtain
\begin{eqnarray*}
& & \alpha_{\lambda}(x, y)d(Tx, Ty)^{2}+\gamma_{\lambda}(x, y)d(Tx, y)^{2}+(\delta_{\lambda}(x, y)+2\min\{ \beta_{\lambda}(x, y), 0 \})d(x, y)^{2} \\
& & \quad +\varepsilon_{\lambda}(x, y)d(x, Tx)^{2}+(\zeta_{\lambda}(x, y)+2\min\{ \beta_{\lambda}(x, y), 0 \})d(y, Ty)^{2} \\
& & \le 0, \\
& & \alpha_{\lambda}(x, y)d(Tx, Ty)^{2}+\beta_{\lambda}(x, y)d(x, Ty)^{2}+(\delta_{\lambda}(x, y)+2\min\{ \gamma_{\lambda}(x, y), 0 \})d(x, y)^{2} \\
& & \quad +(\varepsilon_{\lambda}(x, y)+2\min\{ \gamma_{\lambda}d(x, y), 0 \})d(x, Tx)^{2}+\zeta_{\lambda}(x, y)d(y, Ty)^{2} \\
& & \le 0.
\end{eqnarray*}
Replacing $x$ with $T^{n-1}x$ and $y$ with $T^{n}x$ in the inequality above, we obtain
\begin{eqnarray*}
& & (\alpha_{\lambda}(T^{n-1}x, T^{n}x)+\zeta_{\lambda}(T^{n-1}x, T^{n}x)+2\min\{ \beta_{\lambda}(T^{n-1}x, T^{n}x), 0 \})) \\
& & \quad\quad\quad\quad \times d(T^{n}x, T^{n+1}x)^{2} \\
& & \quad +(\delta_{\lambda}(T^{n-1}x, T^{n}x)+\varepsilon_{\lambda}(T^{n-1}x, T^{n}x)+2\min\{ \beta_{\lambda}(T^{n-1}x, T^{n}x), 0 \})) \\
& & \quad\quad\quad\quad \times d(T^{n-1}x, T^{n}x)^{2} \\
& & \le 0.
\end{eqnarray*}
In the case of (1), since
\begin{eqnarray*}
d(T^{n}x, T^{n+1}x) = 0
\end{eqnarray*}
holds for any $n \in \NN$, $\{ T^{n}x \mid n \in \NN \}$ is Cauchy. In the case of (2), since
\begin{eqnarray*}
d(T^{n-1}x, T^{n}x) = 0
\end{eqnarray*}
holds for any $n \in \NN$, $\{ T^{n}x \mid n \in \NN \}$ is Cauchy. Replacing $x$ with $T^{n}x$ and $y$ with $T^{n-1}x$ in the inequality below, we obtain
\begin{eqnarray*}
& & (\alpha_{\lambda}(T^{n}x, T^{n-1}x)+\varepsilon_{\lambda}(T^{n}x, T^{n-1}x)+2\min\{ \gamma_{\lambda}(T^{n}x, T^{n-1}x), 0 \})) \\
& & \quad\quad\quad\quad \times d(T^{n+1}x, T^{n}x)^{2} \\
& & \quad +(\delta_{\lambda}(T^{n}x, T^{n-1}x)+\zeta_{\lambda}(T^{n}x, T^{n-1}x)+2\min\{ \gamma_{\lambda}(T^{n}x, T^{n-1}x), 0 \})) \\
& & \quad\quad\quad\quad \times d(T^{n}x, T^{n-1}x)^{2} \\
& & \le 0.
\end{eqnarray*}
In the case of (3), since
\begin{eqnarray*}
d(T^{n+1}x, T^{n}x) = 0
\end{eqnarray*}
holds for any $n \in \NN$, $\{ T^{n}x \mid n \in \NN \}$ is Cauchy. In the case of (4), since
\begin{eqnarray*}
d(T^{n}x, T^{n-1}x) = 0
\end{eqnarray*}
holds for any $n \in \NN$, $\{ T^{n}x \mid n \in \NN \}$ is Cauchy. In the case of (5), since
\begin{eqnarray*}
& & d(T^{n}x, T^{n+1}x)^{2} \\
& & \le -\frac{\delta_{\lambda}(T^{n-1}x, T^{n}x)+\varepsilon_{\lambda}(T^{n-1}x, T^{n}x)+2\min\{ \beta_{\lambda}(T^{n-1}x, T^{n}x), 0 \}}{\alpha_{\lambda}(T^{n-1}x, T^{n}x)+\zeta_{\lambda}(T^{n-1}x, T^{n}x)+2\min\{ \beta_{\lambda}(T^{n-1}x, T^{n}x), 0 \}} \\
& & \quad\quad\quad\quad \times d(T^{n-1}x, T^{n}x)^{2} \\
& & \le Ad(T^{n-1}x, T^{n}x)^{2} \\
& & \le A^{n}d(x, Tx)^{2}
\end{eqnarray*}
or
\begin{eqnarray*}
& & d(T^{n+1}x, T^{n}x)^{2} \\
& & \le -\frac{\delta_{\lambda}(T^{n}x, T^{n-1}x)+\zeta_{\lambda}(T^{n}x, T^{n-1}x)+2\min\{ \gamma_{\lambda}(T^{n}x, T^{n-1}x), 0 \}}{\alpha_{\lambda}(T^{n}x, T^{n-1}x)+\varepsilon_{\lambda}(T^{n}x, T^{n-1}x)+2\min\{ \gamma_{\lambda}(T^{n}x, T^{n-1}x), 0 \}} \\
& & \quad\quad\quad\quad \times d(T^{n}x, T^{n-1}x)^{2} \\
& & \le Ad(T^{n-1}x, T^{n}x)^{2} \\
& & \le A^{n}d(x, Tx)^{2},
\end{eqnarray*}
we obtain
\begin{eqnarray*}
d(T^{n}x, T^{m}x) & \le & \sum_{k = n}^{m-1}d(T^{k}x, T^{k+1}x) \\
& \le & \sum_{k = n}^{m-1}A^{\frac{k}{2}}d(x, Tx) \\
& \le & \frac{A^{\frac{n}{2}}}{1-A^{\frac{1}{2}}}d(x, Tx)
\end{eqnarray*}
for any $m > n$. Therefore, $\{ T^{n}x \mid n \in \NN \}$ is Cauchy.
\end{proof}

By Theorem~\ref{th:1} we obtain the following directly.

\begin{theorem}
\label{th:2}
Let $(X, d)$ be a complete metric space and let $T$ be an $(\alpha, \beta, \gamma, \delta, \varepsilon, \zeta)$-weighted generalized pseudocontraction. Suppose that there exists a mapping $\lambda$ from $X\times X$ into $[0, 1]$ such that one of the following holds:
\begin{condition}{0000}
\item[(1)] $\alpha_{\lambda}(x, y)+\zeta_{\lambda}(x, y)+2\min\{ \beta_{\lambda}(x, y), 0 \} > 0$ and $\delta_{\lambda}(x, y)+\varepsilon_{\lambda}(x, y)+2\min\{ \beta_{\lambda}(x, \allowbreak y), \allowbreak 0 \} \ge 0$ for any $x, y \in X$;
\item[(2)] $\alpha_{\lambda}(x, y)+\zeta_{\lambda}(x, y)+2\min\{ \beta_{\lambda}(x, y), 0 \} \ge 0$ and $\delta_{\lambda}(x, y)+\varepsilon_{\lambda}(x, y)+2\min\{ \beta_{\lambda}(x, \allowbreak y), \allowbreak 0 \} > 0$ for any $x, y \in X$;
\item[(3)] $\alpha_{\lambda}(x, y)+\varepsilon_{\lambda}(x, y)+2\min\{ \gamma_{\lambda}(x, y), 0 \} > 0$ and $\delta_{\lambda}(x, y)+\zeta_{\lambda}(x, y)+2\min\{ \gamma_{\lambda}(x, \allowbreak y), \allowbreak 0 \} \ge 0$ for any $x, y \in X$;
\item[(4)] $\alpha_{\lambda}(x, y)+\varepsilon_{\lambda}(x, y)+2\min\{ \gamma_{\lambda}(x, y), 0 \} \ge 0$ and $\delta_{\lambda}(x, y)+\varepsilon_{\lambda}(x, y)+2\min\{ \beta_{\lambda}(x, \allowbreak y), \allowbreak 0 \} > 0$ for any $x, y \in X$;
\item[(5)] there exists $A \in (0, 1)$ such that for any $x, y \in X$, $\alpha_{\lambda}(x, y)+\zeta_{\lambda}(x, y)+2\min\{ \beta_{\lambda}(x, y), 0 \} > 0$ and
\begin{eqnarray*}
-\frac{\delta_{\lambda}(x, y)+\varepsilon_{\lambda}(x, y)+2\min\{ \beta_{\lambda}(x, y), 0 \}}{\alpha_{\lambda}(x, y)+\zeta_{\lambda}(x, y)+2\min\{ \beta_{\lambda}(x, y), 0 \}} \le A,
\end{eqnarray*}
or $\alpha_{\lambda}(y, x)+\varepsilon_{\lambda}(y, x)+2\min\{ \gamma_{\lambda}(y, x), 0 \} > 0$ and
\begin{eqnarray*}
-\frac{\delta_{\lambda}(y, x)+\zeta_{\lambda}(y, x)+2\min\{ \gamma_{\lambda}(y, x), 0 \}}{\alpha_{\lambda}(y, x)+\varepsilon_{\lambda}(y, x)+2\min\{ \gamma_{\lambda}(y, x), 0 \}} \le A.
\end{eqnarray*}
\end{condition}
Then $\{ T^{n}x \mid n \in \NN \}$ is convergent to a point in $X$ for any $x \in X$.
\end{theorem}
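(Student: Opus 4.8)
The plan is to observe that Theorem~\ref{th:2} is an immediate consequence of Theorem~\ref{th:1} together with the completeness hypothesis, so the entire argument reduces to verifying that the hypotheses of Theorem~\ref{th:1} are met and then invoking the definition of completeness. I would begin by noting that the data of Theorem~\ref{th:2} — namely the metric space $(X, d)$, the $(\alpha, \beta, \gamma, \delta, \varepsilon, \zeta)$-weighted generalized pseudocontraction $T$, and the mapping $\lambda \colon X\times X \to [0, 1]$ satisfying one of the conditions (1)--(5) — are, apart from the additional assumption that $(X, d)$ is complete, exactly the hypotheses of Theorem~\ref{th:1}. Since condition (1) (resp. (2), (3), (4), (5)) in the statement of Theorem~\ref{th:2} is literally the same as condition (1) (resp. (2), (3), (4), (5)) in Theorem~\ref{th:1}, no translation between hypotheses is required.

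Next I would fix an arbitrary $x \in X$ and apply Theorem~\ref{th:1} to the sequence $\{ T^{n}x \mid n \in \NN \}$, obtaining that this sequence is Cauchy in $(X, d)$. This is the only nontrivial input, and it has already been established; in particular the case analysis over conditions (1)--(5), including the contractive estimate $d(T^{n}x, T^{n+1}x)^{2} \le A^{n}d(x, Tx)^{2}$ in case (5) and the degenerate identities $d(T^{n-1}x, T^{n}x) = 0$ or $d(T^{n}x, T^{n+1}x) = 0$ in cases (1)--(4), need not be repeated here.

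Finally, since $(X, d)$ is assumed complete, every Cauchy sequence in $X$ converges to a point of $X$; applying this to $\{ T^{n}x \mid n \in \NN \}$ yields a limit point in $X$. As $x \in X$ was arbitrary, this holds for every $x \in X$, which is the assertion of Theorem~\ref{th:2}.

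I do not anticipate any real obstacle: the substantive work — deriving the Cauchy property from the weighted generalized pseudocontraction inequality via Lemmas~\ref{lemma:1} and~\ref{lemma:2} — is entirely contained in Theorem~\ref{th:1}, and what remains is the routine step of passing from "Cauchy" to "convergent" using completeness. The only point requiring any care is bookkeeping: making sure that the five conditions are quoted verbatim so that Theorem~\ref{th:1} applies without modification.
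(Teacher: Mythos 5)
Your proposal is correct and matches the paper exactly: the paper gives no separate proof of Theorem~\ref{th:2}, stating only that it follows directly from Theorem~\ref{th:1}, i.e.\ the Cauchy property combined with completeness yields convergence. Nothing further is needed.
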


By Theorem~\ref{th:2} we obtain the following.

\begin{theorem}
\label{th:3}
Let $(X, d)$ be a complete metric space and let $T$ be an $(\alpha, \beta, \gamma, \delta, \varepsilon, \zeta)$-weighted generalized pseudocontraction. Suppose that there exists a mapping $\lambda$ from $X\times X$ into $[0, 1]$ such that one of the following holds:
\begin{condition}{0000}
\item[(1)] $\alpha_{\lambda}(x, y)+\zeta_{\lambda}(x, y)+2\min\{ \beta_{\lambda}(x, y), 0 \} > 0$ and $\delta_{\lambda}(x, y)+\varepsilon_{\lambda}(x, y)+2\min\{ \beta_{\lambda}(x, \allowbreak y), \allowbreak 0 \} \ge 0$ for any $x, y \in X$;
\item[(2)] $\alpha_{\lambda}(x, y)+\zeta_{\lambda}(x, y)+2\min\{ \beta_{\lambda}(x, y), 0 \} \ge 0$ and $\delta_{\lambda}(x, y)+\varepsilon_{\lambda}(x, y)+2\min\{ \beta_{\lambda}(x, \allowbreak y), \allowbreak 0 \} > 0$ for any $x, y \in X$;
\item[(3)] $\alpha_{\lambda}(x, y)+\varepsilon_{\lambda}(x, y)+2\min\{ \gamma_{\lambda}(x, y), 0 \} > 0$ and $\delta_{\lambda}(x, y)+\zeta_{\lambda}(x, y)+2\min\{ \gamma_{\lambda}(x, \allowbreak y), \allowbreak 0 \} \ge 0$ for any $x, y \in X$;
\item[(4)] $\alpha_{\lambda}(x, y)+\varepsilon_{\lambda}(x, y)+2\min\{ \gamma_{\lambda}(x, y), 0 \} \ge 0$ and $\delta_{\lambda}(x, y)+\varepsilon_{\lambda}(x, y)+2\min\{ \beta_{\lambda}(x, \allowbreak y), \allowbreak 0 \} > 0$ for any $x, y \in X$;
\item[(5)] there exist $A \in (0, 1)$ and $B \in (0, \infty)$ such that  for any $x, y \in X$, $\alpha_{\lambda}(x, y)+\zeta_{\lambda}(x, y)+2\min\{ \beta_{\lambda}(x, y), \allowbreak 0 \} > 0$,
\begin{eqnarray*}
-\frac{\delta_{\lambda}(x, y)+\varepsilon_{\lambda}(x, y)+2\min\{ \beta_{\lambda}(x, y), 0 \}}{\alpha_{\lambda}(x, y)+\zeta_{\lambda}(x, y)+2\min\{ \beta_{\lambda}(x, y), 0 \}} \le A,
\end{eqnarray*}
and $\alpha_{\lambda}(x, y)+\beta_{\lambda}(x, y)+\zeta_{\lambda}(x, y) \ge B$, or $\alpha_{\lambda}(y, x)+\varepsilon_{\lambda}(y, x)+2\min\{ \gamma_{\lambda}(y, x), \allowbreak 0 \} > 0$,
\begin{eqnarray*}
-\frac{\delta_{\lambda}(y, x)+\zeta_{\lambda}(y, x)+2\min\{ \gamma_{\lambda}(y, x), 0 \}}{\alpha_{\lambda}(y, x)+\varepsilon_{\lambda}(y, x)+2\min\{ \gamma_{\lambda}(y, x), 0 \}} \le A,
\end{eqnarray*}
and $\alpha_{\lambda}(y, x)+\gamma_{\lambda}(y, x)+\varepsilon_{\lambda}(y, x) \ge B$. Furthermore, there exissts $M \in (0, \infty)$ such that $|\alpha(x, y)| \le M$, $|\beta(x, y)| \le M$, $|\gamma(x, y)| \le M$, $|\delta(x, y)| \le M$, $|\varepsilon(x, y)| \le M$, and $|\zeta(x, y)| \le M$ for any $x, y \in X$.
\end{condition}
Then $T$ has a fixed point $u$, where $u = \lim_{n \to \infty}T^{n}x$ for any $x \in X$.
\end{theorem}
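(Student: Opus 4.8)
The plan is to obtain Theorem~\ref{th:3} from Theorem~\ref{th:2} together with the two features of its condition~(5) that go beyond condition~(5) of Theorem~\ref{th:2}: the uniform lower bound $\alpha_\lambda+\beta_\lambda+\zeta_\lambda\ge B$ (respectively $\alpha_\lambda+\gamma_\lambda+\varepsilon_\lambda\ge B$) and the uniform bound $M$ on the six weights. First I would note that each $\lambda$-averaged weight is again bounded by $M$, being a convex combination of two values of the original weights; in particular $|\min\{\beta_\lambda(x,y),0\}|\le M$ and $|\min\{\gamma_\lambda(x,y),0\}|\le M$. I would also note that deleting the lower-bound clauses from condition~(5) leaves precisely condition~(5) of Theorem~\ref{th:2}, while conditions (1)--(4) are literally the same in both theorems; hence Theorem~\ref{th:2} applies, and for any fixed $x\in X$ it yields a point $u=\lim_{n\to\infty}T^{n}x$. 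The whole task then reduces to showing $Tu=u$.

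In cases (1)--(4) this is immediate from the proof of Theorem~\ref{th:1}: in cases (1) and (3) one has $d(T^{n}x,T^{n+1}x)=0$ for all $n\in\NN$, so $T^{n}x=Tx$ for every $n\ge 1$, whence $u=Tx$ and $Tu=T^{2}x=Tx=u$; in cases (2) and (4) one has $d(T^{n-1}x,T^{n}x)=0$ for all $n\in\NN$, so $x=Tx$, whence $u=x=Tu$. Thus the only real work is in case~(5).

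The main obstacle in case~(5) is that $T$ is not assumed continuous, so $\lim_{n}T^{n}x=u$ gives no direct information about $Tu$; one must instead feed $u$ into the weighted generalized pseudocontraction inequality and control coefficient sequences that need not converge. Applying condition~(5) to the pair $(T^{n}x,u)$, for every $n$ at least one of
\[
\alpha_\lambda(T^{n}x,u)+\beta_\lambda(T^{n}x,u)+\zeta_\lambda(T^{n}x,u)\ge B,\qquad
\alpha_\lambda(u,T^{n}x)+\gamma_\lambda(u,T^{n}x)+\varepsilon_\lambda(u,T^{n}x)\ge B
\]
holds, so one of the two holds along a subsequence $(n_{k})$. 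Suppose the first one does. Putting $x\mapsto T^{n_{k}}x$ and $y\mapsto u$ in the $(\alpha_\lambda,\beta_\lambda,\gamma_\lambda,\delta_\lambda,\varepsilon_\lambda,\zeta_\lambda)$-weighted generalized pseudocontraction inequality (legitimate by Lemma~\ref{lemma:2}), I would then let $k\to\infty$: since $T^{n_{k}}x\to u$ and $T^{n_{k}+1}x\to u$, the three summands carrying $d(T^{n_{k}+1}x,u)^{2}$, $d(T^{n_{k}}x,u)^{2}$ and $d(T^{n_{k}}x,T^{n_{k}+1}x)^{2}$ tend to $0$ (their coefficients being $\le M$ in absolute value), while $d(T^{n_{k}+1}x,Tu)^{2}\to d(u,Tu)^{2}$ and $d(T^{n_{k}}x,Tu)^{2}\to d(u,Tu)^{2}$ by continuity of $d$, so again by the bound $M$ the first two summands differ from $(\alpha_\lambda+\beta_\lambda)(T^{n_{k}}x,u)\,d(u,Tu)^{2}$ by a null sequence. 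Collecting, $(\alpha_\lambda+\beta_\lambda+\zeta_\lambda)(T^{n_{k}}x,u)\,d(u,Tu)^{2}\le o(1)$, and since that coefficient is $\ge B>0$ we get $B\,d(u,Tu)^{2}\le o(1)$, forcing $d(u,Tu)=0$. If instead the second alternative holds along $(n_{k})$, the same argument with $x\mapsto u$ and $y\mapsto T^{n_{k}}x$ applies, the surviving coefficient now being $\alpha_\lambda+\gamma_\lambda+\varepsilon_\lambda$. Either way $Tu=u$, and $u=\lim_{n}T^{n}x$ by construction, which is the assertion of Theorem~\ref{th:3}.
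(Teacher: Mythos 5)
Your proposal is correct and follows essentially the same route as the paper: reduce to Theorem~\ref{th:2} for convergence, dispose of cases (1)--(4) via the vanishing of consecutive differences, and in case (5) substitute $(T^{n}x,u)$ or $(u,T^{n}x)$ into the $\lambda$-averaged pseudocontraction inequality, using the bound $M$ to kill the vanishing terms and the lower bound $B$ on the surviving coefficient to force $d(u,Tu)=0$. Your subsequence extraction is a harmless bookkeeping variant of the paper's case split (for each $n$ whichever alternative holds yields the same estimate), so the two arguments are essentially identical.
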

\begin{proof}
In the cases of (1) and  (3), the set of all fixed points of $T$ is equal to $T(X)$ and $\lim_{n \to \infty}T^{n}x = Tx$. In the cases of (2) and (4), the set of all fixed points of $T$ is equal to $X$ and $\lim_{n \to \infty}T^{n}x = x$.

We show in the case of (5). By Theorem~\ref{th:2} $\{ T^{n}x \mid n \in \NN \}$ is convergent to a point $u$ in $X$ for any $x \in \NN$. Replacing $x$ with $T^{n}x$ and $y$ with $u$, and replacing $x$ with $u$ and $y$ with $T^{n}x$, we obtain
\begin{eqnarray*}
& & \alpha_{\lambda}(T^{n}x, u)d(T^{n+1}x, Tu)^{2}+\beta_{\lambda}(T^{n}x, u)d(T^{n}x, Tu)^{2} \\
& & \quad +\gamma_{\lambda}(T^{n}x, u)d(T^{n+1}x, u)^{2}+\delta_{\lambda}(T^{n}x, u)d(T^{n}x, u)^{2} \\
& & \quad +\varepsilon_{\lambda}(T^{n}x, u)d(T^{n}x, T^{n+1}x)^{2}+\zeta_{\lambda}(T^{n}x, u)d(u, Tu)^{2} \\
& & \le 0, \\
& & \alpha_{\lambda}(u, T^{n}x)d(Tu, T^{n+1}x)^{2}+\beta_{\lambda}(u, T^{n}x)d(u, T^{n+1}x)^{2} \\
& & \quad +\gamma_{\lambda}(u, T^{n}x)d(Tu, T^{n}x)^{2}+\delta_{\lambda}(u, T^{n}x)d(u, T^{n}x)^{2} \\
& & \quad +\varepsilon_{\lambda}(u, T^{n}x)d(u, Tu)^{2}+\zeta_{\lambda}(u, T^{n}x)d(T^{n}x, T^{n+1}x)^{2} \\
& & \le 0.
\end{eqnarray*}
In the case where $\alpha_{\lambda}(x, y)+\zeta_{\lambda}(x, y)+2\min\{ \beta_{\lambda}(x, y), \allowbreak 0 \} > 0$,
\begin{eqnarray*}
-\frac{\delta_{\lambda}(x, y)+\varepsilon_{\lambda}(x, y)+2\min\{ \beta_{\lambda}(x, y), 0 \}}{\alpha_{\lambda}(x, y)+\zeta_{\lambda}(x, y)+2\min\{ \beta_{\lambda}(x, y), 0 \}} \le A,
\end{eqnarray*}
and $\alpha_{\lambda}(x, y)+\beta_{\lambda}(x, y)+\zeta_{\lambda}(x, y) \ge B$, for any $\rho \in (0, \infty)$, there exists $N \in \NN$ such that $|d(T^{n}x, Tu)^{2}-d(u, Tu)^{2}| < \rho$, $d(T^{n}x, u)^{2} < \rho$, and $d(T^{n}x, T^{n+1}x)^{2} < \rho$ for any $n > N$. Therefore,
\begin{eqnarray*}
& & \alpha_{\lambda}(T^{n}x, u)d(T^{n+1}x, Tu)^{2}+\beta_{\lambda}(T^{n}x, u)d(T^{n}x, Tu)^{2} \\
& & \quad +\gamma_{\lambda}(T^{n}x, u)d(T^{n+1}x, u)^{2}+\delta_{\lambda}(T^{n}x, u)d(T^{n}x, u)^{2} \\
& & \quad +\varepsilon_{\lambda}(T^{n}x, u)d(T^{n}x, T^{n+1}x)^{2}+\zeta_{\lambda}(T^{n}x, u)d(u, Tu)^{2} \\
& & > \alpha_{\lambda}(T^{n}x, u)d(u, Tu)^{2}-M\rho+\beta_{\lambda}(T^{n}x, u)d(u, Tu)^{2}-M\rho \\
& & \quad -M\rho-M\rho \\
& & \quad -M\rho+\zeta_{\lambda}(T^{n}x, u)d(u, Tu)^{2} \\
& & \ge Bd(u, Tu)^{2}-5M\rho.
\end{eqnarray*}
Since $\rho$ is arbitrary, $d(u, Tu) \le 0$, that is, $u$ is a fixed point of $T$. In the case where $\alpha_{\lambda}(y, x)+\varepsilon_{\lambda}(y, x)+2\min\{ \gamma_{\lambda}(y, x), \allowbreak 0 \} > 0$,
\begin{eqnarray*}
-\frac{\delta_{\lambda}(y, x)+\zeta_{\lambda}(y, x)+2\min\{ \gamma_{\lambda}(y, x), 0 \}}{\alpha_{\lambda}(y, x)+\varepsilon_{\lambda}(y, x)+2\min\{ \gamma_{\lambda}(y, x), 0 \}} \le A,
\end{eqnarray*}
and $\alpha_{\lambda}(y, x)+\gamma_{\lambda}(y, x)+\varepsilon_{\lambda}(y, x) \ge B$, for any $\rho \in (0, \infty)$, there exists $N \in \NN$ such that $|d(Tu, T^{n}x)^{2}-d(Tu, u)^{2}| < \rho$, $d(u, T^{n}x)^{2} < \rho$, and $d(T^{n+1}x, T^{n}x)^{2} < \rho$ for any $n > N$. Therefore,
\begin{eqnarray*}
& & \alpha_{\lambda}(u, T^{n}x)d(Tu, T^{n+1}x)^{2}+\beta_{\lambda}(u, T^{n}x)d(u, T^{n+1}x)^{2} \\
& & \quad +\gamma_{\lambda}(u, T^{n}x)d(Tu, T^{n}x)^{2}+\delta_{\lambda}(u, T^{n}x)d(u, T^{n}x)^{2} \\
& & \quad +\varepsilon_{\lambda}(u, T^{n}x)d(u, Tu)^{2}+\zeta_{\lambda}(u, T^{n}x)d(T^{n}x, T^{n+1}x)^{2} \\
& & > \alpha_{\lambda}(u, T^{n}u)d(Tu, u)^{2}-M\rho-M\rho \\
& & \quad +\gamma_{\lambda}(u, T^{n}x)d(Tu, u)^{2}-M\rho-M\rho \\
& & \quad +\varepsilon_{\lambda}(u, T^{n}x)d(u, Tu)^{2}-M\rho \\
& & \ge Bd(Tu, u)^{2}-5M\rho.
\end{eqnarray*}
Since $\rho$ is arbitrary, $d(Tu, u) \le 0$, that is, $u$ is a fixed point of $T$.
\end{proof}


\section{Applying to the Collatz conjecture}
\label{sec:applying-to-the-collatz-conjecture}


Let $C$ be a mapping from $\NN$ into itself defined by
\begin{eqnarray*}
Cx \defequal \left\{\begin{array}{ll}
                   \frac{1}{2}x, & \textrm{if} \ x \ \textrm{is even}, \\
                   3x+1, & \textrm{if} \ x \ \textrm{is odd}.
                   \end{array}\right.
\end{eqnarray*}
Then the Collatz conjecture is as follows: for any $x \in \NN$, there exists $c(x) \in \NN$ such that $C^{c(x)}x = 1$.

Clearly, $c(1) = 3$. Since $3x+1$ is even whenever $x$ is odd, $C^{2}x = \frac{1}{2}(3x+1)$ for any odd number $x$. Let $T$ be a mapping from $\NN$ into itself defined by
\begin{eqnarray*}
Tx \defequal \left\{\begin{array}{ll}
                  C^{3}x = 1, & \textrm{if} \ x = 1, \\
                  Cx = \frac{1}{2}x, & \textrm{if} \ x \ \textrm{is even}, \\
                  C^{2}x = \frac{1}{2}(3x+1), & \textrm{if} \ x \ \textrm{is odd and} \ x \ge 3.
                  \end{array}\right.
\end{eqnarray*}
Then, if we can prove that for any $x \in \NN$, there exists $t(x) \in \NN$ such that $T^{t(x)}x = 1$, then the Collatz conjecture is true.

Define $d(x, y) \defequal |x-y|$ for any $x, y \in \NN$. Then $(\NN, d)$ is a complete metric space.

\begin{theorem}
\label{th:4}
Let
\begin{eqnarray*}
\alpha(x, y) & \defequal & \left\{\begin{array}{cl}
                                    1, & \textrm{if} \ x = 1 \ \textrm{and} \ y = 1, \\
                                    1, & \textrm{if} \ x = 1 \ \textrm{and} \ y \ \textrm{is even}, \\
                                    0, & \textrm{if} \ x = 1, y \ \textrm{is odd, and} \ y \ge 3, \\
                                    1, & \textrm{if} \ x \ \textrm{is even and} \ y = 1, \\
                                    1, & \textrm{if} \ x \ \textrm{is even and} \ y \ \textrm{is even}, \\
                                    0, & \textrm{if} \ x \ \textrm{is even}, y \ \textrm{is odd, and} \ y \ge 3, \\
                                    1, & \textrm{if} \ x \ \textrm{is odd}, \ x \ge 3, \ \textrm{and} \ y = 1, \\
                                    0, & \textrm{if} \ x \ \textrm{is odd}, \ x \ge 3, \ \textrm{and} \ y \ \textrm{is even}, \\
                                    2, & \textrm{if} \ x \ \textrm{is odd}, x \ge 3, y \ \textrm{is odd, and} \ y \ge 3;
                                    \end{array}\right. \\
\beta(x, y) & \defequal & \left\{\begin{array}{cl}
                                    0, & \textrm{if} \ x = 1 \ \textrm{and} \ y = 1, \\
                                    0, & \textrm{if} \ x = 1 \ \textrm{and} \ y \ \textrm{is even}, \\
                                    0, & \textrm{if} \ x = 1, y \ \textrm{is odd, and} \ y \ge 3, \\
                                    0, & \textrm{if} \ x \ \textrm{is even and} \ y = 1, \\
                                    0, & \textrm{if} \ x \ \textrm{is even and} \ y \ \textrm{is even}, \\
                                    0, & \textrm{if} \ x \ \textrm{is even}, y \ \textrm{is odd, and} \ y \ge 3, \\
                                    0, & \textrm{if} \ x \ \textrm{is odd}, \ x \ge 3, \ \textrm{and} \ y = 1, \\
                                    -2, & \textrm{if} \ x \ \textrm{is odd}, \ x \ge 3, \ \textrm{and} \ y \ \textrm{is even}, \\
                                    \beta_{0}(x, y), & \textrm{if} \ x \ \textrm{is odd}, x \ge 3, y \ \textrm{is odd, and} \ y \ge 3;
                                   \end{array}\right. \\
\gamma(x, y) & \defequal & \left\{\begin{array}{cl}
                                    0, & \textrm{if} \ x = 1 \ \textrm{and} \ y = 1, \\
                                    0, & \textrm{if} \ x = 1 \ \textrm{and} \ y \ \textrm{is even}, \\
                                    0, & \textrm{if} \ x = 1, y \ \textrm{is odd, and} \ y \ge 3, \\
                                    1, & \textrm{if} \ x \ \textrm{is even and} \ y = 1, \\
                                    -1, & \textrm{if} \ x \ \textrm{is even and} \ y \ \textrm{is even}, \\
                                    -2, & \textrm{if} \ x \ \textrm{is even}, y \ \textrm{is odd, and} \ y \ge 3, \\
                                    -1, & \textrm{if} \ x \ \textrm{is odd}, \ x \ge 3, \ \textrm{and} \ y = 1, \\
                                    0, & \textrm{if} \ x \ \textrm{is odd}, \ x \ge 3, \ \textrm{and} \ y \ \textrm{is even}, \\
                                    -\beta_{0}(x, y), & \textrm{if} \ x \ \textrm{is odd}, x \ge 3, y \ \textrm{is odd, and} \ y \ge 3;
                                      \end{array}\right. \\
\delta(x, y) & \defequal & \left\{\begin{array}{cl}
                                    0, & \textrm{if} \ x = 1 \ \textrm{and} \ y = 1, \\
                                    -1, & \textrm{if} \ x = 1 \ \textrm{and} \ y \ \textrm{is even}, \\
                                    -2, & \textrm{if} \ x = 1, y \ \textrm{is odd, and} \ y \ge 3, \\
                                    -1, & \textrm{if} \ x \ \textrm{is even and} \ y = 1, \\
                                    0, & \textrm{if} \ x \ \textrm{is even and} \ y \ \textrm{is even}, \\
                                    1, & \textrm{if} \ x \ \textrm{is even}, y \ \textrm{is odd, and} \ y \ge 3, \\
                                    -1, & \textrm{if} \ x \ \textrm{is odd}, \ x \ge 3, \ \textrm{and} \ y = 1, \\
                                    1, & \textrm{if} \ x \ \textrm{is odd}, \ x \ge 3, \ \textrm{and} \ y \ \textrm{is even}, \\
                                    \delta_{0}(x, y), & \textrm{if} \ x \ \textrm{is odd}, x \ge 3, y \ \textrm{is odd, and} \ y \ge 3;
                                   \end{array}\right. \\
\varepsilon(x, y) & \defequal & \left\{\begin{array}{cl}
                                    -1, & \textrm{if} \ x = 1 \ \textrm{and} \ y = 1, \\
                                    0, & \textrm{if} \ x = 1 \ \textrm{and} \ y \ \textrm{is even}, \\
                                    1, & \textrm{if} \ x = 1, y \ \textrm{is odd, and} \ y \ge 3, \\
                                    0, & \textrm{if} \ x \ \textrm{is even and} \ y = 1, \\
                                    -1, & \textrm{if} \ x \ \textrm{is even and} \ y \ \textrm{is even}, \\
                                    -2, & \textrm{if} \ x \ \textrm{is even}, y \ \textrm{is odd, and} \ y \ge 3, \\
                                    0, & \textrm{if} \ x \ \textrm{is odd}, \ x \ge 3, \ \textrm{and} \ y = 1, \\
                                    2, & \textrm{if} \ x \ \textrm{is odd}, \ x \ge 3, \ \textrm{and} \ y \ \textrm{is even}, \\
                                    \varepsilon_{0}(x, y), & \textrm{if} \ x \ \textrm{is odd}, x \ge 3, y \ \textrm{is odd, and} \ y \ge 3;
                                          \end{array}\right. \\
\zeta(x, y) & \defequal & \left\{\begin{array}{cl}
                                    1, & \textrm{if} \ x = 1 \ \textrm{and} \ y = 1, \\
                                    1, & \textrm{if} \ x = 1 \ \textrm{and} \ y \ \textrm{is even}, \\
                                    2, & \textrm{if} \ x = 1, y \ \textrm{is odd, and} \ y \ge 3, \\
                                    1, & \textrm{if} \ x \ \textrm{is even and} \ y = 1, \\
                                    1, & \textrm{if} \ x \ \textrm{is even and} \ y \ \textrm{is even}, \\
                                    2, & \textrm{if} \ x \ \textrm{is even}, y \ \textrm{is odd, and} \ y \ge 3, \\
                                    1, & \textrm{if} \ x \ \textrm{is odd}, \ x \ge 3, \ \textrm{and} \ y = 1, \\
                                    -2, & \textrm{if} \ x \ \textrm{is odd}, \ x \ge 3, \ \textrm{and} \ y \ \textrm{is even}, \\
                                    \zeta_{0}(x, y) & \textrm{if} \ x \ \textrm{is odd}, x \ge 3, y \ \textrm{is odd, and} \ y \ge 3;
                                   \end{array}\right.
\end{eqnarray*}
for any $x, y \in \NN$, where
\begin{eqnarray*}
\beta_{0}(x, y) & \defequal & \left\{\begin{array}{cl}
                                       -2, & \textrm{if} \ k-\ell \le -2, \\
                                       k-\ell, & \textrm{if} \ |k-\ell| \le 1, \\
                                       2, & \textrm{if} \ k-\ell \ge 2;
                                       \end{array}\right. \\
\delta_{0}(x, y) & \defequal & \left\{\begin{array}{cl}
                                       -2, & \textrm{if} \ \begin{array}{l}
                                                                k-\ell \le -2 \ \textrm{and} \ 11k-10\ell+1 \le 0, \\
                                                                \textrm{or} \\
                                                                k-\ell \ge 2 \ \textrm{and} \ -10k+11\ell+1 \le 0,
                                                                \end{array} \\
                                       -1, & \textrm{otherwise};
                                       \end{array}\right. \\
\varepsilon_{0}(x, y) & \defequal & \left\{\begin{array}{cl}
                                              2, & \textrm{if} \ k-\ell \le -2 \ \textrm{and} \ 11k-10\ell+1 \le 0, \\
                                              0, & \textrm{otherwise};
                                              \end{array}\right. \\
\zeta_{0}(x, y) & \defequal & \left\{\begin{array}{cl}
                                       2, & \textrm{if} \ k-\ell \ge 2 \ \textrm{and} \ -10k+11\ell+1 \le 0, \\
                                       0, & \textrm{otherwise};
                                       \end{array}\right. \\
\end{eqnarray*}
for any $x = 2k+1 \ (k \in \NN)$ and for any $y = 2\ell+1 \ (\ell \in \NN)$. Then $T$ is an $(\alpha, \beta, \gamma, \delta, \varepsilon, \zeta)$-weighted generalized pseudocontraction.
\end{theorem}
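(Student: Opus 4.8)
The plan is to verify the single defining inequality of an $(\alpha,\beta,\gamma,\delta,\varepsilon,\zeta)$-weighted generalized pseudocontraction by a direct case analysis, exploiting that $T$ and all six weights are defined piecewise along one common partition of $\NN\times\NN$. I would split $\NN\times\NN$ into the nine sectors obtained by letting each of $x$ and $y$ be $1$, be even, or be odd and $\ge 3$; these are pairwise disjoint and exhaust $\NN\times\NN$. In each sector I substitute the explicit values of $Tx$ and $Ty$ (namely $T1=1$, $T(2m)=m$, $T(2k+1)=3k+2$) together with the constant weights, and compute the six squared distances $d(Tx,Ty)^{2}$, $d(x,Ty)^{2}$, $d(Tx,y)^{2}$, $d(x,y)^{2}$, $d(x,Tx)^{2}$, $d(y,Ty)^{2}$ as explicit quadratics in the relevant integer parameters $m$, $n$, $k$, $\ell$. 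The left-hand side of the pseudocontraction inequality then collapses to a single polynomial, and the task in that sector is to show it is $\le 0$. It is worth recording at the start that in any sector containing $x=1$ the term $d(x,Tx)^{2}$ vanishes and in any sector containing $y=1$ the term $d(y,Ty)^{2}$ vanishes, so the weights $\varepsilon$ (resp.\ $\zeta$) play no role there.

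In the eight sectors other than the one where $x$ and $y$ are both odd and $\ge 3$ this is a short computation. For instance, for $x=1$ and $y=2m$ the left side reduces to $(m-1)^{2}-(2m-1)^{2}+m^{2}=-2m(m-1)\le 0$; for $x=2m$ and $y=2n$ it reduces to $-(m-n)^{2}-n^{2}\le 0$; for $x=2m$ and $y=2\ell+1$ with $\ell\ge 1$ it reduces to $-2\ell^{2}+1\le -1<0$; for $x=2k+1$ with $k\ge 1$ and $y=1$ it reduces to $-4k^{2}\le 0$; for $x=1$ and $y=2\ell+1$ with $\ell\ge 1$ it reduces to $-2(3\ell+1)(\ell-1)\le 0$; and the remaining three cases reduce similarly to $-2m^{2}+1$, $-2k^{2}+1$, and $0$. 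Each of these is visibly $\le 0$ for $m,n,k,\ell\in\NN$.

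The substance of the theorem is the sector $x=2k+1$, $y=2\ell+1$ with $k,\ell\ge 1$. There $Tx=3k+2$ and $Ty=3\ell+2$, so $d(Tx,Ty)=3|k-\ell|$, $d(x,y)=2|k-\ell|$, $d(x,Tx)=k+1$, $d(y,Ty)=\ell+1$, $d(x,Ty)=|2k-3\ell-1|$, and $d(Tx,y)=|3k-2\ell+1|$. Since $\gamma(x,y)=-\beta(x,y)=-\beta_{0}(x,y)$ in this sector, the two ``mixed'' terms combine through the identity
\[
d(x,Ty)^{2}-d(Tx,y)^{2}=(2k-3\ell-1)^{2}-(3k-2\ell+1)^{2}=-5(k+\ell+2)(k-\ell),
\]
so, using $\alpha(x,y)=2$, the left-hand side becomes
\[
(18+4\delta_{0})(k-\ell)^{2}-5\beta_{0}(k+\ell+2)(k-\ell)+\varepsilon_{0}(k+1)^{2}+\zeta_{0}(\ell+1)^{2},
\]
with $\beta_{0}=\beta_{0}(x,y)$ and likewise for $\delta_{0},\varepsilon_{0},\zeta_{0}$. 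I then split according to the definition of $\beta_{0}$. If $|k-\ell|\le 1$ then $\beta_{0}=k-\ell$, $\delta_{0}=-1$, $\varepsilon_{0}=\zeta_{0}=0$, and the expression factors as $(k-\ell)^{2}(4-5k-5\ell)\le 0$. If $k-\ell\ge 2$ and $11\ell+1\le 10k$ then $\beta_{0}=2$, $\delta_{0}=-2$, $\zeta_{0}=2$, $\varepsilon_{0}=0$, and it factors as $2(\ell+1)(-10k+11\ell+1)\le 0$. If $k-\ell\ge 2$ and $11\ell+1>10k$ then $\beta_{0}=2$, $\delta_{0}=-1$, $\varepsilon_{0}=\zeta_{0}=0$, and it factors as $4(k-\ell)(k-6\ell-5)$, which is $\le 0$ because $10k<11\ell+1$ gives $10(k-6\ell-5)<-49\ell-49<0$. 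The case $k-\ell\le -2$ is the mirror image of the previous two under the swap $k\leftrightarrow\ell$, which sends $\beta_{0}\mapsto-\beta_{0}$, fixes $\delta_{0}$, and interchanges $\varepsilon_{0}$ and $\zeta_{0}$; running the same computation yields $2(k+1)(11k-10\ell+1)\le 0$ when $11k-10\ell+1\le 0$ and $4(k-\ell)(6k-\ell+5)\le 0$ otherwise.

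I expect the only real difficulty here to be organizational rather than conceptual: there are on the order of a dozen sub-cases, and the argument succeeds precisely because the thresholds built into $\beta_{0},\delta_{0},\varepsilon_{0},\zeta_{0}$ are placed exactly where the relevant quadratic changes sign. The one sub-case that is not automatic is $k-\ell\ge 2$ with $11\ell+1>10k$: there the leading coefficient $18+4\delta_{0}=14$ is positive, so one must actually invoke the defining inequality $10k<11\ell+1$ to bound $k$ in terms of $\ell$ and push the factored product below zero. Everything else is routine expansion and factoring of quadratics.
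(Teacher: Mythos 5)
Your proposal is correct and follows essentially the same route as the paper: the same nine-sector case split, the same reduction of the doubly-odd sector to $(18+4\delta_{0})(k-\ell)^{2}-5\beta_{0}(k+\ell+2)(k-\ell)+\varepsilon_{0}(k+1)^{2}+\zeta_{0}(\ell+1)^{2}$ via the difference-of-squares identity, and the same sub-cases keyed to the thresholds in $\beta_{0},\delta_{0},\varepsilon_{0},\zeta_{0}$. The only cosmetic difference is that you dispatch the $k-\ell\le -2$ sub-cases by the $k\leftrightarrow\ell$ symmetry (which does hold, since $\beta_{0}$ is odd and $\delta_{0}$ symmetric under the swap while $\varepsilon_{0}$ and $\zeta_{0}$ interchange) where the paper recomputes them directly.
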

\begin{proof}
In the case where $x = 1$ and $y = 1$: since $Tx = Ty = 1$, we obtain
\begin{eqnarray*}
& & \alpha(1, 1)d(1, 1)^{2}+\beta(1, 1)d(1, 1)^{2}+\gamma(1, 1)d(1, 1)^{2}+\delta(1, 1)d(1, 1)^{2} \\
& & \quad +\varepsilon(1, 1)d(1, 1)^{2}+\zeta(1, 1)d(1, 1)^{2} \\
& & = 1\times 0^{2}+0\times 0^{2}+0\times 0^{2}+0\times 0^{2} \\
& & \quad +(-1)\times 0^{2}+1\times 0^{2} \\
& & = 0.
\end{eqnarray*}

In the case where $x = 1$ and $y$ is even: let $y = 2\ell \ (\ell \in \NN)$. Since $Tx = 1$ and $Ty = \ell$, we obtain
\begin{eqnarray*}
& & \alpha(1, 2\ell)d(1, \ell)^{2}+\beta(1, 2\ell)d(1, \ell)^{2}+\gamma(1, 2\ell)d(1, 2\ell)^{2}+\delta(1, 2\ell)d(1, 2\ell)^{2} \\
& & \quad +\varepsilon(1, 2\ell)d(1, 1)^{2}+\zeta(1, 2\ell)d(2\ell, \ell)^{2} \\
& & = 1\times(1-\ell)^{2}+0\times(1-\ell)^{2}+0\times(1-2\ell)^{2}+(-1)\times(1-2\ell)^{2} \\
& & \quad +0\times 0^{2}+1\times\ell^{2} \\
& & = -2\ell^{2}+2\ell \\
& & \le 0.
\end{eqnarray*}

In the case where $x$ is even and $y = 1$: let $x = 2k \ (k \in \NN)$. Since $Tx = k$ and $Ty = 1$, we obtain
\begin{eqnarray*}
& & \alpha(2k, 1)d(k, 1)^{2}+\beta(2k, 1)d(2k, 1)^{2}+\gamma(2k, 1)d(k, 1)^{2}+\delta(2k, 1)d(2k, 1)^{2} \\
& & \quad +\varepsilon(2k, 1)d(2k, k)^{2}+\zeta(2k, 1)d(1, 1)^{2} \\
& & = 1\times(k-1)^{2}+0\times(2k-1)^{2}+1\times(k-1)^{2}+(-1)\times(2k-1)^{2} \\
& & \quad +0\times k^{2}+1\times 0^2 \\
& & = -2k^{2}+1 \\
& & \le -1.
\end{eqnarray*}

In the case where $x$ is even and $y$ is even: let $x = 2k \ (k \in \NN)$ and let $y = 2\ell \ (\ell \in \NN)$. Since $Tx = k$ and $Ty = \ell$, we obtain
\begin{eqnarray*}
& & \alpha(2k, 2\ell)d(k, \ell)^{2}+\beta(2k, 2\ell)d(2k, \ell)^{2}+\gamma(2k, 2\ell)d(k, 2\ell)^{2}+\delta(2k, 2\ell)d(2k, 2\ell)^{2} \\
& & \quad +\varepsilon(2k, 2\ell)d(2k, k)^{2}+\zeta(2k, 2\ell)d(2\ell, \ell)^{2} \\
& & = 1\times(k-\ell)^{2}+0\times(2k-\ell)^{2}+(-1)\times(k-2\ell)^{2}+0\times(2k-2\ell)^{2} \\
& & \quad +(-1)\times k^{2}+1\times\ell^2 \\
& & = -k^{2}+2k\ell-2\ell^{2} \\
& & \le -1.
\end{eqnarray*}

In the case where $x = 1$, $y$ is odd, and $y \ge 3$: let $y = 2\ell+1 \ (\ell \in \NN)$. Since $Tx = 1$ and $Ty = 3\ell+2$, we obtain
\begin{eqnarray*}
& & \alpha(1, 2\ell+1)d(1, 3\ell+2)^{2}+\beta(1, 2\ell+1)d(1, 3\ell+2)^{2}+\gamma(1, 2\ell+1)d(1, 2\ell+1)^{2} \\
& & \quad +\delta(1, 2\ell+1)d(1, 2\ell+1)^{2} \\
& & \quad +\varepsilon(1, 2\ell+1)d(1, 1)^{2}+\zeta(1, 2\ell+1)d(2\ell+1, 3\ell+2)^{2} \\
& & = 0\times(3\ell+1)^{2}+0\times(3\ell+1)^{2}+0\times(2\ell)^{2}+(-2)\times(2\ell)^{2} \\
& & \quad +1\times 0^{2}+2\times(\ell+1)^{2} \\
& & = -6\ell^{2}+4\ell+2 \\
& & \le 0.
\end{eqnarray*}

In the case where $x$ is odd, $x \ge 3$, and $y = 1$: let $x = 2k+1 \ (k \in \NN)$. Since $Tx = 3k+2$ and $Ty = 1$, we obtain
\begin{eqnarray*}
& & \alpha(2k+1, 1)d(3k+2, 1)^{2}+\beta(2k+1, 1)d(2k+1, 1)^{2}+\gamma(2k+1, 1)d(3k+2, 1)^{2} \\
& & \quad +\delta(2k+1, 1)d(2k+1, 1)^{2} \\
& & \quad +\varepsilon(2k+1, 1)d(2k+1, 3k+2)^{2}+\zeta(2k+1, 1)d(1, 1)^{2} \\
& & = 1\times(3k+1)^{2}+0\times(2k)^{2}+(-1)\times(3k+1)^{2}+(-1)\times(2k)^{2} \\
& & \quad +0\times(k+1)^{2}+1\times 0^{2} \\
& & = -4k^{2} \\
& & \le -4.
\end{eqnarray*}

In the case where $x$ is even, $y$ is odd, and $y \ge 3$: let $x = 2k \ (k \in \NN)$ and let $y = 2\ell+1 \ (\ell \in \NN)$. Since $Tx = k$ and $Ty = 3\ell+2$, we obtain
\begin{eqnarray*}
& & \alpha(2k, 2\ell+1)d(k, 3\ell+2)^{2}+\beta(2k, 2\ell+1)d(2k, 3\ell+2)^{2} \\
& & \quad +\gamma(2k, 2\ell+1)d(k, 2\ell+1)^{2}+\delta(2k, 2\ell+1)d(2k, 2\ell+1)^{2} \\
& & \quad +\varepsilon(2k, 2\ell+1)d(2k, k)^{2}+\zeta(2k, 2\ell+1)d(2\ell+1, 3\ell+2)^{2} \\
& & = 0\times(k-3\ell-2)^{2}+0\times(2k-3\ell-2)^{2}+(-2)\times(k-2\ell-1)^{2} \\
& & \quad +1\times(2k-2\ell-1)^{2}+(-2)\times k^{2}+2\times(\ell+1)^2 \\
& & = -2\ell^{2}+1 \\
& & \le -1.
\end{eqnarray*}

In the case where $x$ is odd, $x \ge 3$, and $y$ is even: let $x = 2k+1 \ (k \in \NN)$ and $y = 2\ell \ (\ell \in \NN)$. Since $Tx = 3k+2$ and $Ty = \ell$, we obtain
\begin{eqnarray*}
& & \alpha(2k+1, 2\ell)d(3k+2, \ell)^{2}+\beta(2k+1, 2\ell)d(2k+1, \ell)^{2} \\
& & \quad +\gamma(2k+1, 2\ell)d(3k+2, 2\ell)^{2}+\delta(2k+1, 2\ell)d(2k+1, 2\ell)^{2} \\
& & \quad +\varepsilon(2k+1, 2\ell)d(2k+1, 3k+2)^{2}+\zeta(2k+1, 2\ell)d(2\ell, \ell)^{2} \\
& & = 0\times(3k-\ell+2)^{2}+(-2)\times(2k-\ell+1)^{2}+0\times(3k-2\ell+2)^{2} \\
& & \quad +1\times(2k-2\ell+1)^{2}+2\times(k+1)^{2}+(-2)\times\ell^2 \\
& & = -2k^{2}+1 \\
& & \le -1.
\end{eqnarray*}

In the case where $x$ is odd, $x \ge 3$, $y$ is odd, and $y \ge 3$: let $x = 2k+1 \ (k \in \NN)$ and $y = 2\ell+1 \ (\ell \in \NN)$. Since $Tx = 3k+2$ and $Ty = 3\ell+2$, we obtain
\begin{eqnarray*}
& & \alpha(2k+1, 2\ell+1)d(3k+2, 3\ell+2)^{2}+\beta(2k+1, 2\ell+1)d(2k+1, 3\ell+2)^{2} \\
& & \quad +\gamma(2k+1, 2\ell+1)d(3k+2, 2\ell+1)^{2}+\delta(2k+1, 2\ell+1)d(2k+1, 2\ell+1)^{2} \\
& & \quad +\varepsilon(2k+1, 2\ell+1)d(2k+1, 3k+2)^{2}+\zeta(2k+1, 2\ell+1)d(2\ell+1, 3\ell+2)^{2} \\
& & = 2\times(3k-3\ell)^{2}+\beta_{0}(2k+1, 2\ell+1)\times(2k-3\ell-1)^{2} \\
& & \quad -\beta_{0}(2k+1, 2\ell+1)\times(3k-2\ell+1)^{2}+\delta_{0}(2k+1, 2\ell+1)\times(2k-2\ell)^{2} \\
& & \quad +\varepsilon_{0}(2k+1, 2\ell+1)\times(k+1)^{2}+\zeta_{0}(2k+1, 2\ell+1)\times(\ell+1)^{2} \\
& & = (18+4\delta_{0}(2k+1, 2\ell+1))(k-\ell)^{2}-5\beta_{0}(2k+1, 2\ell+1)(k-\ell)(k+\ell+2) \\
& & \quad +\varepsilon_{0}(2k+1, 2\ell+1)(k+1)^{2}+\zeta_{0}(2k+1, 2\ell+1)(\ell+1)^{2}.
\end{eqnarray*}
In the case of $k-\ell \le -2$ and $11k-10\ell+1 \le 0$: since $\beta_{0}(2k+1, 2\ell+1) = -2$, $\delta_{0}(2k+1, 2\ell+1) = -2$, $\varepsilon_{0}(2k+1, 2\ell+1) = 2$ and $\zeta_{0}(2k+1, 2\ell+1) = 0$, we obtain
\begin{eqnarray*}
& & (18+4\delta_{0}(2k+1, 2\ell+1))(k-\ell)^{2}-5\beta_{0}(2k+1, 2\ell+1)(k-\ell)(k+\ell+2) \\
& & \quad +\varepsilon_{0}(2k+1, 2\ell+1)(k+1)^{2}+\zeta_{0}(2k+1, 2\ell+1)(\ell+1)^{2} \\
& & = 10(k-\ell)^{2}+10(k-\ell)(k+\ell+2)+2(k+1)^{2} \\
& & = 2(k+1)(11k-10\ell+1) \\
& & \le 0.
\end{eqnarray*}
In the case of $k-\ell \le -2$ and $11k-10\ell+1 \ge 1$: since $\beta_{0}(2k+1, 2\ell+1) = -2$, $\delta_{0}(2k+1, 2\ell+1) = -1$, $\varepsilon_{0}(2k+1, 2\ell+1) = \zeta_{0}(2k+1, 2\ell+1) = 0$, we obtain
\begin{eqnarray*}
& & (18+4\delta_{0}(2k+1, 2\ell+1))(k-\ell)^{2}-5\beta_{0}(2k+1, 2\ell+1)(k-\ell)(k+\ell+2) \\
& & \quad +\varepsilon_{0}(2k+1, 2\ell+1)(k+1)^{2}+\zeta_{0}(2k+1, 2\ell+1)(\ell+1)^{2} \\
& & = 14(k-\ell)^{2}+10(k-\ell)(k+\ell+2) \\
& & = 4(k-\ell)(6k-\ell+5).
\end{eqnarray*}
Since
\begin{eqnarray*}
5(-k+\ell-2)+(11k-10\ell) = 6k-5\ell-10 \ge 0
\end{eqnarray*}
and $6k-\ell+5 > 6k-5\ell-10$, we obtain
\begin{eqnarray*}
4(k-\ell)(6k-\ell+5) \le -8.
\end{eqnarray*}
In the case of $k-\ell \ge 2$ and $-10k+11\ell+1 \le 0$: since $\beta_{0}(2k+1, 2\ell+1) = 2$, $\delta_{0}(2k+1, 2\ell+1) = -2$, $\varepsilon_{0}(2k+1, 2\ell+1) = 0$ and $\zeta_{0}(2k+1, 2\ell+1) = 2$, we obtain
\begin{eqnarray*}
& & (18+4\delta_{0}(2k+1, 2\ell+1))(k-\ell)^{2}-5\beta_{0}(2k+1, 2\ell+1)(k-\ell)(k+\ell+2) \\
& & \quad +\varepsilon_{0}(2k+1, 2\ell+1)(k+1)^{2}+\zeta_{0}(2k+1, 2\ell+1)(\ell+1)^{2} \\
& & = 10(k-\ell)^{2}-10(k-\ell)(k+\ell+2)+2(\ell+1)^{2} \\
& & = 2(\ell+1)(-10k+11\ell+1) \\
& & \le 0.
\end{eqnarray*}
In the case of $k-\ell \ge 2$ and $-10k+11\ell+1 \ge 1$: since $\beta_{0}(2k+1, 2\ell+1) = 2$, $\delta_{0}(2k+1, 2\ell+1) = -1$, $\varepsilon_{0}(2k+1, 2\ell+1) = \zeta_{0}(2k+1, 2\ell+1) = 0$, we obtain
\begin{eqnarray*}
& & (18+4\delta_{0}(2k+1, 2\ell+1))(k-\ell)^{2}-5\beta_{0}(2k+1, 2\ell+1)(k-\ell)(k+\ell+2) \\
& & \quad +\varepsilon_{0}(2k+1, 2\ell+1)(k+1)^{2}+\zeta_{0}(2k+1, 2\ell+1)(\ell+1)^{2} \\
& & = 14(k-\ell)^{2}-10(k-\ell)(k+\ell+2) \\
& & = 4(k-\ell)(k-6\ell-5).
\end{eqnarray*}
Since
\begin{eqnarray*}
5(-k+\ell-2)+(10k-11\ell) = 5k-6\ell-10 \le 0
\end{eqnarray*}
and $k-6\ell-5 < 5k-6\ell-10$, we obtain
\begin{eqnarray*}
4(k-\ell)(k-6\ell-5) \le -8.
\end{eqnarray*}
In other cases: since $\beta_{0}(2k+1, 2\ell+1) = k-\ell$, $\delta_{0}(2k+1, 2\ell+1) = -1$, $\varepsilon_{0}(2k+1, 2\ell+1) = \zeta_{0}(2k+1, 2\ell+1) = 0$, we obtain
\begin{eqnarray*}
& & (18+4\delta_{0}(2k+1, 2\ell+1))(k-\ell)^{2}-5\beta_{0}(2k+1, 2\ell+1)(k-\ell)(k+\ell+2) \\
& & \quad +\varepsilon_{0}(2k+1, 2\ell+1)(k+1)^{2}+\zeta_{0}(2k+1, 2\ell+1)(\ell+1)^{2} \\
& & = 14(k-\ell)^{2}-5(k-\ell)^{2}(k+\ell+2) \\
& & = (k-\ell)^{2}(4-5(k+\ell)) \\
& & \le 0.
\end{eqnarray*}

From the above, $T$ is an $(\alpha, \beta, \gamma, \delta, \varepsilon, \zeta)$-weighted generalized pseudocontraction.
\end{proof}

\begin{remark}
By Theorem~\ref{th:4} $T$ is an $(\alpha, \beta, \gamma, \delta, \varepsilon, \zeta)$-weighted generalized pseudocontraction. Let $\lambda$ be a mapping from $X\times X$ into $[0, 1]$ defined by $\lambda(x, y) = 1$, let $A = \frac{1}{2}$, let $B = 2$, and let $M = 2$.

Then, do $\alpha_{\lambda}$, $\beta_{\lambda}$, $\gamma_{\lambda}$, $\delta_{\lambda}$, $\varepsilon_{\lambda}$, and $\zeta_{\lambda}$ satisfy (5) in Theorem~\ref{th:3}?

Clearly, $|\alpha_{\lambda}(x, y)| \le M$, $|\beta_{\lambda}(x, y)| \le M$, $|\gamma_{\lambda}(x, y)| \le M$, $|\delta_{\lambda}(x, y)| \le M$, $|\varepsilon_{\lambda}(x, y)| \le M$, and $|\zeta_{\lambda}(x, y)| \le M$.

In the cases where $x = 1$ and $y = 1$, $x = 1$ and $y$ is even, $x$ is even and $y = 1$, $x = 1$ and $y$ is odd and $y \ge 3$, $x$ is odd and $x \ge 3$ and $y = 1$, $x$ is even and $y$ is even, or $x$ is even and $y$ is odd and $y \ge 3$,
\begin{eqnarray*}
\alpha_{\lambda}(x, y)+\zeta_{\lambda}(x, y)+2\min\{ \beta_{\lambda}(x, y), 0 \} & = & \alpha(x, y)+\zeta(x, y)+2\min\{ \beta(x, y), 0 \} \\
& = & 2, \\
\delta_{\lambda}(x, y)+\varepsilon_{\lambda}(x, y)+2\min\{ \beta_{\lambda}(x, y), 0 \} & = & \delta(x, y)+\varepsilon(x, y)+2\min\{ \beta(x, y), 0 \} \\
& = & -1, \\
\alpha_{\lambda}(x, y)+\beta_{\lambda}(x, y)+\zeta_{\lambda}(x, y) & = & \alpha(x, y)+\beta(x, y)+\zeta(x, y) \\
& = & 2 = B
\end{eqnarray*}
and
\begin{eqnarray*}
-\frac{\delta_{\lambda}(x, y)+\varepsilon_{\lambda}(x, y)+2\min\{ \beta_{\lambda}(x, y), 0 \}}{\alpha_{\lambda}(x, y)+\zeta_{\lambda}(x, y)+2\min\{ \beta_{\lambda}(x, y), 0 \}} = \frac{1}{2} = A.
\end{eqnarray*}
In the case where $x$ is odd, $x \ge 3$, $y$ is odd, and $y \ge 3$: if $x \ge y$, then since  the possible combinations of $\alpha(x, y)$, $\beta(x, y)$, $\gamma(x, y)$, $\delta(x, y)$, $\varepsilon(x, y)$, and $\zeta(x, y)$ are $(2, 2, -2, -2, 0, 2)$, $(2, 2, -2, -1, 0, 0)$, $(2, 1, -1, -1, 0, 0)$, and $(2, 0, 0, -1, 0, 0)$,
\begin{eqnarray*}
& & \alpha_{\lambda}(x, y)+\zeta_{\lambda}(x, y)+2\min\{ \beta_{\lambda}(x, y), 0 \} \\
&  & = \alpha(x, y)+\zeta(x, y)+2\min\{ \beta(x, y), 0 \} \\
&  & = \left\{\begin{array}{cl}
         4, & \textrm{in the case of} \ (2, 2, -2, -2, 0, 2), \\
         2, & \textrm{in the case of} \ (2, 2, -2, -1, 0, 0), \\
         2, & \textrm{in the case of} \ (2, 1, -1, -1, 0, 0), \\
         2, & \textrm{in the case of} \ (2, 0, 0, -1, 0, 0),
         \end{array}\right. \\
& & \delta_{\lambda}(x, y)+\varepsilon_{\lambda}(x, y)+2\min\{ \beta_{\lambda}(x, y), 0 \} \\
&  & = \delta(x, y)+\varepsilon(x, y)+2\min\{ \beta(x, y), 0 \} \\
&  & = \left\{\begin{array}{cl}
         -2, & \textrm{in the case of} \ (2, 2, -2, -2, 0, 2), \\
         -1, & \textrm{in the case of} \ (2, 2, -2, -1, 0, 0), \\
         -1, & \textrm{in the case of} \ (2, 1, -1, -1, 0, 0), \\
         -1, & \textrm{in the case of} \ (2, 0, 0, -1, 0, 0),
         \end{array}\right. \\
& & \alpha_{\lambda}(x, y)+\beta_{\lambda}(x, y)+\zeta_{\lambda}(x, y) \\
&  & = \alpha(x, y)+\beta(x, y)+\zeta(x, y) \\
&  & = \left\{\begin{array}{cl}
         6, & \textrm{in the case of} \ (2, 2, -2, -2, 0, 2), \\
         4, & \textrm{in the case of} \ (2, 2, -2, -1, 0, 0), \\
         3, & \textrm{in the case of} \ (2, 1, -1, -1, 0, 0), \\
         2, & \textrm{in the case of} \ (2, 0, 0, -1, 0, 0)
         \end{array}\right. \\
&  & \ge B,
\end{eqnarray*}
and
\begin{eqnarray*}
-\frac{\delta_{\lambda}(x, y)+\varepsilon_{\lambda}(x, y)+2\min\{ \beta_{\lambda}(x, y), 0 \}}{\alpha_{\lambda}(x, y)+\zeta_{\lambda}(x, y)+2\min\{ \beta_{\lambda}(x, y), 0 \}} = \frac{1}{2} = A.
\end{eqnarray*}
Unfortunately, the conditions of the theorem~\ref{th:3} are not satisfied in other cases.

Furthermore, since $\NN$ is discrete, Theorem~\ref{th:2} is sufficient without using Theorem~\ref{th:3}, however, the conditions of the theorem~\ref{th:2} are also not satisfied.
\end{remark}


\begin{bibdiv}
\begin{biblist}
\bib{Kawasaki2016a}{article}{
  author={Kawasaki, Toshiharu},
  title={Fixed point theorems for widely more generalized hybrid mappings in metric spaces, Banach spaces and Hilbert spaces},
  journal={Journal of Nonlinear and Convex Analysis},
  volume={19},
  date={2018},
  pages={1675--1683},
}
\bib{Kawasaki2018a}{article}{
  author={Kawasaki, Toshiharu},
  title={Fixed point and acute point theorems for new mappings in a Banach space},
  journal={Journal of Mathematics},
  volume={2019},
  date={2019},
  pages={12 pages},
}
\bib{Kawasaki2021}{article}{
  author={Kawasaki, Toshiharu},
  title={Fixed point and acute point theorems for generalized pseudocontractions in a Banach space},
  journal={Journal of Nonlinear and Convex Analysis},
  volume={22},
  date={2021},
  pages={1057--1075},
}
\bib{Tao2022}{article}{
  author={Tao, Terence},
  title={Almost all orbits of the Collatz map attain almost bounded values},
  journal={Forum of Mathematics, Pi},
  volume={10},
  date={2022},
  pages={56 pages},
}
\end{biblist}
\end{bibdiv}


\end{document}